\newcommand{\C}{\mathbb C}
\newcommand{\N}{\mathbb N}
\newcommand{\Z}{\mathbb Z}
\newtheorem{theorem}{\rm\bf Theorem}[section]
\newtheorem{lemma}[theorem]{\rm\bf Lemma}
\newtheorem{corollary}[theorem]{\rm\bf Corollary}
\newtheorem*{theorem 1}{\rm\bf Proposition 1}
\newtheorem*{theorem 2}{\rm\bf Proposition 2}
\newtheorem*{proposition 1}{\rm\bf Proposition 3.2}
\theoremstyle{definition}
\theoremstyle{remark}
\def\interieur#1{\mathord{\mathop{\kern 0pt #1}\limits^\circ}}
\definecolor{NoteColor}{rgb}{1,0,0}
\title[Cluster sets of curves]{The initial and terminal cluster sets of an  analytic curve}
\author{P. M. Gauthier}
\address{Paul Gauthier, D\'epartement de math\'ematiques et de statistique, Universit\'e de Montr\'eal,  Pavillon Andr\'e-Aisenstadt, 2920, chemin de la Tour, Montr\'eal (Qu\'ebec)  H3T 1J4 Canada, 
 email: 
 {\rm gauthier@dms.umontreal.ca}}
\date{\today}
\keywords{Analytic curves, cluster sets}
\subjclass{30B40}
\thanks{ This research was supported through the programme ``Research in Pairs"  by the Mathematisches Forschungsinstitut Oberwolfach in 2016 as well as by NSERC (Canada) }
\begin{document}

\maketitle

\begin{abstract}
For an analytic curve $\gamma:(a,b)\rightarrow \C,$ the set of values approached by  $\gamma(t),$ as $t\searrow a$ and as $t\nearrow b$ can be any two continuua of $\C\cup\{\infty\}.$ 
\end{abstract}

\section{Introduction}

For $-\infty\le a<b\le +\infty,$ and a Riemann surface $X,$ we say that $\gamma:(a,b)\rightarrow X$ is a real-analytic curve, if it is  real-analytic for every local coordinate of $X.$ That is, for every $t_0\in(a,b)$ and every local coordinate $z$ at $\gamma(t_0),$ The function $z\circ\gamma$ is representable by a power series  in an interval centered at $t_0.$ Analytic curves in Riemann surfaces were studied in \cite{GN}, however, 
in this note, we consider only the case that $X$ is the Riemann sphere $\overline\C=\C\cup\{\infty\}.$  Thus, $\gamma$ is analytic if and only if $\gamma$ can be developed in a power series in an interval about each $t_0\in (a,b),$ for which $\gamma(t_0)$ is finite and $1/\gamma$ can be developed in a power series about each point $t_0,$ where $\gamma(t_0)=\infty.$ A real-analytic curve $\gamma$ is said to be {\em regular} if its derivative never vanishes, by which we mean that $(z\circ\gamma)^\prime$ has no zeros, for every local coordinate $z.$ For brevity, we shall say (as many authors do) that $\gamma$ is an analytic curve to mean that $\gamma$ is a regular real-analytic curve. We denote by  $C(\gamma,a)$ and $C(\gamma,b)$ respectively the initial and terminal cluster sets:
$$
	C(\gamma,a) = 
	\{w\in\overline\C: \exists \, t_n\in(a,b), \, t_n\rightarrow a, \, \gamma(t_n)\rightarrow w\},
$$
$$
	C(\gamma,b) = 
	\{w\in\overline\C: \exists \, t_n\in(a,b), \, t_n\rightarrow b, \,  \gamma(t_n)\rightarrow w\}.
$$
Both cluster sets  are continua in $\overline\C ,$ that is,  nonempty compact connected sets. A  {\em degenerate} continuum is a continuum consisting of a single point. 
Our principal result is that the initial and terminal cluster sets can be arbitrarily prescribed continua in $\overline\C.$ For characterizations of other cluster sets, see Theorems 4.3-4.5 and also page 165 in  \cite{CL}.

It is also of interest to know whether an analytic curve can be extended in some sense (which we now specify). A notion of extendability for an analytic curve was introduced by Nestoridis and Papadopoulos in \cite{NP}. 
Let us say that an analytic curve $\gamma(t), \, a<t<b,$ can be extended {\em initially}, if there is an analytic curve  $\sigma(s), \, L<s<R,$ a value $A\in (L,R),$ and an analytic change of parameter $t:(A,R)\rightarrow(a,b),$ such that $\sigma(s)=\gamma(t(s)),$ for $A<s<R.$  We say that $\sigma$ is an initial analytic extension of $\gamma.$ A terminal analytic extension is defined analogously. Let us say that an analytic curve is maximal as an analytic curve (or analytically maximal) if it has neither an intiial nor a terminal analytic extension. 

It is a pleasure to thank Vassili Nestoridis for suggesting I write this paper and for helpful conversations. This manuscript has appeared as an Oberwolfach Preprint
(OWP 2016-25).


\section{Results and preparatory lemmas}

\begin{theorem}\label{entire}
For any two continua $K^-$ and $K^+$ of the Riemann sphere, there exists an analytic curve $\gamma:(-\infty,+\infty)\rightarrow\C,$ which is the restriction of an entire function, such that  
$$
	C(\gamma,-\infty)= K^-
$$
and
$$
	C(\gamma,+\infty) =K^+.
$$ 
Moreover, the curve $\gamma$ is  maximal as an analtyic curve. 
\end{theorem}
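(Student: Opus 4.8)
The plan is to build first an explicit \textbf{model curve} $g:\mathbb{R}\to\C$, of class $C^1$ and regular, whose initial and terminal cluster sets are already $K^-$ and $K^+$, and then to replace $g$ by an entire function $f$ approximating it so closely, together with its first derivative, that $\gamma=f|_{\mathbb{R}}$ inherits the two cluster sets, stays regular, and stays analytically maximal. The approximation is a Carleman-type tangential approximation on $\mathbb{R}$ in the strengthened form allowing control of the first derivative (the refinement due to Hoischen): for any continuous weight $\varepsilon(x)>0$ there is an entire $f$ with $|f(x)-g(x)|<\varepsilon(x)$ and $|f'(x)-g'(x)|<\varepsilon(x)$ for all real $x$. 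Taking $\varepsilon(x)\to0$ as $|x|\to\infty$ does most of the work, but its rate of decay must be tuned to three separate requirements below.

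\textbf{The model curve.} Fix an end, say $+\infty$, and let $d$ denote the spherical metric. For each $n$ the set $U_n=\{z:d(z,K^+)<1/n\}$ is open and connected, being a union of balls each meeting the connected set $K^+$; hence $U_n\cap\C$ is an open connected (and, if $\infty\in K^+$, unbounded) domain. Choose a sequence $w_1,w_2,\dots$ dense in $K^+$. On intervals $[T_n,T_{n+1}]$ with $T_n\to+\infty$, let $g$ run along a polygonal path inside $U_n\cap\C$ passing within $1/n$ of each of $w_1,\dots,w_n$ (and, when $\infty\in K^+$, also attaining modulus $>n$), beginning and ending at a fixed anchor point of $K^+$ so the pieces join. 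Parametrizing by arclength and rounding corners makes $g$ of class $C^1$ with $|g'|$ bounded below here; since $g$ eventually stays in the shrinking $U_n$, every cluster value lies in $K^+$, while density of $\{w_j\}$ forces every point of $K^+$ to be a cluster value, giving $C(g,+\infty)=K^+$. The initial end is built symmetrically and joined regularly at $0$. The only change concerns a \emph{degenerate} end, say $K^+=\{p\}$: there we instead let $g$ spiral into $p$, e.g.\ $g(t)=p+e^{(-1+i)t}$ (or $g(t)=e^{(1-i)t}$ if $p=\infty$), so that $g(t)\to p$ while $\arg(g(t)-p)$ is unbounded. In every case $g$ is $C^1$, regular, with $|g'(x)|\ge\delta(x)$ for a continuous $\delta>0$.

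\textbf{The entire function.} Apply the derivative-Carleman theorem with $\varepsilon(x)$ chosen simultaneously so that $\varepsilon(x)<\delta(x)$, whence $|f'(x)|\ge\delta(x)-\varepsilon(x)>0$ and $\gamma=f|_{\mathbb{R}}$ is regular; so that $\varepsilon(x)\to0$, whence (in the spherical metric) $f$ and $g$ share all limiting behaviour and $C(\gamma,\pm\infty)=K^\pm$; and, near any degenerate end, so that $\varepsilon(x)<\tfrac12|g(x)-p|$, whence $f(t)\ne p$ and $\arg(f(t)-p)$ tracks $\arg(g(t)-p)$ within $\pi/6$, so $f$ still spirals into $p$. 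These three demands are compatible, each merely requiring $\varepsilon$ to lie below a given positive continuous function. Here lies the main obstacle: forcing $C(\gamma,\pm\infty)$ to \emph{equal} $K^\pm$, rather than only to contain or be contained in it, is what makes the neighbourhood-and-density bookkeeping in $g$ necessary, and keeping $f'\ne0$ on all of $\mathbb{R}$ is precisely what makes the first-derivative strengthening of Carleman approximation indispensable.

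\textbf{Maximality.} The key point is that cluster sets are invariant under an orientation-preserving reparametrization. Suppose $\gamma$ had an initial extension $\sigma:(L,R)\to\C$ with $\sigma=\gamma\circ t$ on $(A,R)$, where $t:(A,R)\to(-\infty,c)$ is an analytic change of parameter and $t(s)\searrow-\infty$ as $s\searrow A$. Since $\sigma$ is analytic, hence continuous, at $A$, the limit $\sigma(A)=\lim_{s\searrow A}\sigma(s)$ exists, and as $t$ is a homeomorphism onto a neighbourhood of $-\infty$ this forces $C(\gamma,-\infty)=\{\sigma(A)\}$; thus a non-degenerate $K^-$ already excludes an initial extension, and similarly for $K^+$. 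For a degenerate end $K^-=\{p\}$ the same $\sigma$ would be regular and analytic at $A$ with $\sigma(A)=p$, so $\sigma(s)=p+c_1(s-A)+\cdots$ with $c_1\ne0$ and $\arg(\sigma(s)-p)\to\arg c_1$; but $\sigma|_{(A,R)}=\gamma\circ t$ spirals into $p$, so $\arg(\sigma(s)-p)$ is unbounded, a contradiction (the case $p=\infty$ is identical after passing to $1/\sigma$). Hence $\gamma$ has neither an initial nor, symmetrically, a terminal extension, so it is analytically maximal. The delicate aspect here is the degenerate case, where maximality is not automatic and must be engineered through the spiralling of $g$ and its preservation under the approximation.
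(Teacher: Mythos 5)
Your proposal is correct, and its skeleton --- build a regular $C^1$ model curve realizing the prescribed cluster sets, then apply Carleman approximation with first-derivative control (Hoischen) with an error $\varepsilon(x)\to 0$ tuned so that $\gamma=f|_{\mathbb{R}}$ inherits the cluster sets and stays regular --- is the same as the paper's, which builds its model via a chained sequence (Lemma \ref{sequence}), a polygonal curve (Lemma \ref{polygonal}), and a corner-smoothing lemma (Lemma \ref{corner}); your shrinking-neighborhood-plus-anchor bookkeeping for $C(g,\pm\infty)=K^{\pm}$ is an equivalent variant. Where you genuinely diverge is the maximality mechanism. The paper makes the \emph{tangent direction} diverge in all cases: the polygonal model is equipped with horizontal and vertical segments at parameter values $s_{n(i)},s_{n(j)},s_{n(k)},s_{n(\ell)}$ accumulating at both ends, these survive the smoothing, and the derivative-level Hoischen estimate is used a second time to force $\arg\gamma'(t)$ to oscillate between values near $0$ and near $\pi/2$, so no conformal (angle-preserving) reparametrized extension can exist. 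You instead split cases: for non-degenerate ends you use the same trivial observation as the paper (an extension forces the cluster set to be a singleton), and for a degenerate end $K^{\pm}=\{p\}$ you make the \emph{winding about the limit point} diverge, via the spiral model $g(t)=p+e^{(-1+i)t}$ and the relative estimate $\varepsilon<\tfrac12|g-p|$, which gives $f-p=(g-p)(1+u)$ with $|u|<\tfrac12$, so a continuous branch of $\arg(f-p)$ differs from the unbounded branch of $\arg(g-p)$ by less than $\pi/6$; a regular analytic extension through $p$ would satisfy $\sigma(s)-p\sim c_1(s-A)$ with $c_1\neq 0$, forcing a convergent argument --- a valid contradiction (and correctly transferred to $p=\infty$ via $1/\sigma$). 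The trade-off: your invariant $\arg(\gamma-p)$ needs only $C^0$-level relative control for maximality (the derivative estimate serves solely to keep $\gamma'\neq 0$), and it matches the geometry of the paper's explicit spiral examples in Section 5; the paper's invariant $\arg\gamma'$ costs the derivative estimate but treats all ends uniformly, with no case split and no special model at degenerate ends. Two small points worth making explicit in your write-up: the paper's definition of an initial extension does not a priori orient the change of parameter, so you should note that either orientation of $t(s)$ caps one of the two ends, both of which your construction protects; and the $C^1$ join at $0$ between a spiral half and a polygonal half should be arranged with matching value and derivative, which is routine.
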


For distinct points $z_1$ and $z_2$ in $\C,$ we denote by $[z_1,z_2]$ the line segment from $z_1$ to $z_2.$ 
Also, we denote by $[+i,+i\infty)$ and  $[-i,-i\infty)$ the closed half-lines $\{z=x+iy:x=0, \, +1\le y<+\infty\}$ and $\{z=x+iy:x=0, \, -\infty<y\le -1\}$ respectively.

\begin{corollary}\label{analytic}
For any two continua $K^-$ and $K^+$ of the Riemann sphere, there exists an analytic curve $g:(-\infty,+\infty)\rightarrow\C,$ which extends to a (locally) conformal mapping $G$ on the doubly-slit plane 
$$
	\C\setminus \big([+i,+i\infty)\cup[-i,-i\infty)\big),
$$ 
for which  
$$
	C(g,-\infty)=C(G,-\infty) = K^-
$$
and
$$
	C(g,+\infty)=C(G,+\infty) =K^+.
$$
Moreover, the curve $g$ is maximal as an analtyic curve. 
\end{corollary}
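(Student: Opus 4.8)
The plan is to derive the corollary from Theorem 2.1 by conformally transplanting the entire curve $\gamma$ from the whole plane to the doubly-slit plane. Theorem 2.1 already gives me an entire function $\gamma:(-\infty,+\infty)\to\C$, maximal as an analytic curve, with $C(\gamma,-\infty)=K^-$ and $C(\gamma,+\infty)=K^+$. The idea is to find a conformal map $\varphi$ from the doubly-slit plane $\Omega=\C\setminus\big([+i,+i\infty)\cup[-i,-i\infty)\big)$ onto a domain on which $\gamma$ (or a suitable restriction/reparametrization of it) is defined, in such a way that $\varphi$ carries a horizontal line, say the real axis, onto the parametrizing interval of $\gamma$, while the two ends of that line approach the two slits. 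Then $G=\gamma\circ\varphi$ would be a locally conformal map on $\Omega$, and $g=G|_{\mathbb R}$ would be the desired analytic curve.

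\medskip

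The key observation is that the doubly-slit plane $\Omega$ is simply connected, so by the Riemann mapping theorem it is conformally equivalent to, say, a horizontal strip $S=\{w:|\operatorname{Im}w|<\pi/2\}$, which in turn maps conformally onto the whole plane $\C$ via the exponential-type map $w\mapsto \sinh w$ (or one may map $\Omega$ directly to $\C$). The crucial point is to choose the conformal map $\varphi:\Omega\to\C$ so that the real axis $\mathbb R\subset\Omega$ is carried onto the real axis in the target, with $\varphi(x)\to-\infty$ as $x\to-\infty$ and $\varphi(x)\to+\infty$ as $x\to+\infty$. First I would exhibit such a $\varphi$ explicitly; because the slits are placed symmetrically about the real axis, the segment of $\mathbb R$ between the slit endpoints $\pm i$ together with the two infinite rays of $\mathbb R$ lying outside the slits forms a natural ``spine'' of $\Omega$ fixed by the symmetry $z\mapsto\bar z$, and a symmetric conformal map will keep this spine on the real axis. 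Composing, I set $G=\gamma\circ\varphi$ and $g=G|_{\mathbb R}$.

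\medskip

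The remaining steps are to verify the cluster-set identities and the maximality. For the cluster sets, I would check that $\varphi$ sends the two ends $x\to\pm\infty$ of $\mathbb R$ to the ends $\pm\infty$ of the target interval, so that $C(g,-\infty)=C(\gamma,-\infty)=K^-$ and $C(g,+\infty)=C(\gamma,+\infty)=K^+$, and that the cluster sets of $G$ at the ends agree with those of $g$; the latter follows because as one approaches an end of $\Omega$ along any path the image under $\varphi$ tends to the corresponding end of $\C$, so no new limit values are introduced. Since $g$ is a reparametrization of $\gamma$ by an analytic change of variable, maximality of $\gamma$ as an analytic curve transfers directly to $g$: any initial or terminal analytic extension of $g$ would, after composing with $\varphi^{-1}$, produce one for $\gamma$, contradicting Theorem 2.1.

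\medskip

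The main obstacle I anticipate is pinning down the conformal map $\varphi:\Omega\to\C$ precisely enough to read off the boundary behavior along $\mathbb R$ and at the ends. One must confirm both that $\varphi$ maps $\mathbb R$ onto $\mathbb R$ (respecting orientation of the two ends) and that approach to either end of $\Omega$ forces $\varphi$ to the correct infinite end, so that the cluster sets are genuinely preserved rather than enlarged. This is most cleanly handled by writing down an explicit formula for $\varphi$ as a composition of elementary conformal maps adapted to the geometry of the two vertical slits, and then invoking the reflection symmetry across $\mathbb R$ to guarantee that the real axis is preserved.
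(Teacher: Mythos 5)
Your overall strategy --- transplant the curve of Theorem \ref{entire} by a conformal map of the doubly-slit plane that preserves the real axis and its two ends, then transfer the cluster sets and maximality through the reparametrization --- is the same as the paper's, but your execution has a fatal error: the conformal map you want does not exist. The doubly-slit plane is a \emph{proper} simply connected subdomain of $\C$, so the Riemann mapping theorem makes it conformally equivalent to the unit disc, and it cannot be conformally equivalent to $\C$: the inverse of such a map would be a nonaffine injective entire function, which is impossible. Your explicit candidate makes the mistake visible: $w\mapsto\sinh w$ maps the strip $\{w:|\operatorname{Im}w|<\pi/2\}$ conformally onto the doubly-slit plane $\C\setminus\big([+i,+i\infty)\cup[-i,-i\infty)\big)$ itself (the two boundary lines go to the two slits), not onto $\C$. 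So neither the strip nor the doubly-slit plane maps conformally onto $\C$, and the map $G=\gamma\circ\varphi$ with $\varphi:\Omega\to\C$ conformal cannot be formed.

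Your hedge (``onto a domain on which $\gamma$ \dots is defined'') also leaves unaddressed the two points where the paper does its real work. First, for $G=f\circ h$ to be \emph{locally conformal} on the whole doubly-slit plane you need $f'\neq 0$ on the image domain; $f'$ is nonvanishing on $\mathbb{R}$ (it approximates $\sigma'\neq 0$), but nothing controls its zeros far from the real axis. Second, $C(G,\pm\infty)$ is the cluster set of the two-variable map $G$, so the image domain must cling to the real axis at its ends; were the image large (let alone all of $\C$), the cluster sets of $f$ at the ends would in general strictly contain $K^\pm$ --- for instance by Casorati--Weierstrass when $f$ has an essential singularity at $\infty$. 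The paper resolves both issues at once by choosing a strip $\Omega=\{u+iv:|v|<\varphi(u)\}$ with $\varphi$ decreasing to zero so fast that $f'$ is zero-free there and $|f(u+iv)-\gamma(u)|\to 0$ at the ends, whence $f$ has the same cluster sets in the strip as $\gamma$ on $\mathbb{R}$; it then takes $h$ to be the Riemann map of the doubly-slit plane onto this strip, normalized --- using the reflection symmetry across $\mathbb{R}$, which you correctly anticipated --- to send the real line to itself and the ends to the ends. Your maximality argument (pull an extension of $g$ back through the analytic change of parameter to contradict maximality of $\gamma$) is sound and is a legitimate alternative to the paper's recomputation of $\arg g'(u)=\arg\gamma'(h(u))$ using $h'>0$ on $\mathbb{R}$; but the construction of $G$ itself, as you propose it, fails.
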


\begin{lemma}\label{corner} For $t_1<t_2<t_3,$ and non colinear points $z_1,z_2,z_3\in\C,$  consider the parametrisations $\sigma_1:[t_j,t_{j+1}]\rightarrow [z_j,z_{j+1}]$ of the segments $[z_j,z_{j+1}],$ given by 
$$
	\sigma_j(t)=z_j+\frac{t-t_j}{t_{j+1}-t_j}(z_{j+1}-z_j), \quad \mbox{for}  \quad   j=1,2.
$$
For each  $\epsilon>0,$ and all sufficiently small $delta>0,$ there is a $C^1$-smooth curve  $\sigma_\delta:[t_1,t_3]\rightarrow\C,$ with nonvanishing derivative, such that 
$$
	\sigma_\delta(t) = \left\{ 
\begin{array}{ll}
	\sigma_1(t) 	&	\mbox{if \, $t_1\le t\le t_2-\delta$};\\
	\sigma_2(t)	&	\mbox{if \, $t_2+\delta<t\le t_3$};
\end{array}
\right.
$$
$\sigma_\delta^\prime(t)\not=0,$ for $ t\in [t_1,t_3];$ and $|\sigma_\delta(t)-\sigma_j(t)|<\epsilon,$ for $t\in [t_j,t_{j+1}], \, j=1,2.$
\end{lemma}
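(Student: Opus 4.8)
The plan is to smooth the corner by mollifying the broken-line path's velocity and then integrating back up. Write the two constant velocity vectors as $u_1=\sigma_1'\equiv(z_2-z_1)/(t_2-t_1)$ and $u_2=\sigma_2'\equiv(z_3-z_2)/(t_3-t_2)$. Both are nonzero, and the crucial consequence of non-collinearity is that $u_1$ and $u_2$ are \emph{not} real scalar multiples of one another (otherwise $z_2-z_1$ and $z_3-z_2$ would be parallel and the three points collinear); hence the origin does not lie on the closed segment $[u_1,u_2]\subset\C$.

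First I would define a continuous velocity field $v_\delta:[t_1,t_3]\to\C$ by setting $v_\delta\equiv u_1$ on $[t_1,t_2-\delta]$, $v_\delta\equiv u_2$ on $[t_2+\delta,t_3]$, and, on the transition window $[t_2-\delta,t_2+\delta]$, the straight ramp $v_\delta(t)=u_1+\tfrac{t-(t_2-\delta)}{2\delta}(u_2-u_1)$. Then $\sigma_\delta(t):=z_1+\int_{t_1}^{t}v_\delta(s)\,ds$ is $C^1$ with $\sigma_\delta'=v_\delta$. Integrating over $[t_1,t_2-\delta]$ gives $\sigma_\delta=\sigma_1$ there verbatim. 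For the right-hand match, the symmetric ramp integrates to $\int_{t_2-\delta}^{t_2+\delta}v_\delta=2\delta u_1+\delta(u_2-u_1)=\delta(u_1+u_2)$, so $\sigma_\delta(t_2+\delta)=\sigma_1(t_2-\delta)+\delta(u_1+u_2)=(z_2-\delta u_1)+\delta(u_1+u_2)=z_2+\delta u_2=\sigma_2(t_2+\delta)$; since $\sigma_\delta'=u_2=\sigma_2'$ on $[t_2+\delta,t_3]$ and the two curves agree at $t_2+\delta$, they coincide on the whole of that interval. Non-vanishing of $\sigma_\delta'$ is immediate: off the window $v_\delta\in\{u_1,u_2\}$, while on the window $v_\delta$ takes values only on $[u_1,u_2]$, which avoids $0$ by the observation above.

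For the $\epsilon$-estimate, note that outside the window $\sigma_\delta$ coincides with $\sigma_1$ or $\sigma_2$, so there is nothing to prove. On $[t_2-\delta,t_2+\delta]$ one checks that both $\sigma_\delta(t)$ and the relevant $\sigma_j(t)$ stay within distance $C\delta$ of the corner $z_2$, where $C$ is a fixed multiple of $\max(|u_1|,|u_2|)$; hence $|\sigma_\delta(t)-\sigma_j(t)|\le C\delta$, which is $<\epsilon$ once $\delta<\epsilon/C$. The main obstacle, and the only place geometry enters, is guaranteeing the non-vanishing derivative throughout the transition: this is exactly what non-collinearity secures by keeping $0$ off the velocity segment, while the symmetry of the ramp is precisely what forces the endpoint value to match $\sigma_2$ \emph{exactly} rather than merely approximately.
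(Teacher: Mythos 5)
Your proof is correct, and while it produces in substance the same curve as the paper, the route is genuinely different. The paper first normalizes by affine changes of variable to the symmetric model $t_1=-1$, $t_2=0$, $t_3=1$, $\sigma_1(t)=t-ict$, $\sigma_2(t)=t+ict$, and then writes down an explicit even quadratic blend, so that the nonvanishing of the derivative is visible by inspection (the real part of $\sigma_\delta^\prime$ is identically $1$) and the $\epsilon$-estimate is a one-line computation. You instead work invariantly in velocity space: you mollify $\sigma^\prime$ by a linear ramp from $u_1$ to $u_2$ over $[t_2-\delta,t_2+\delta]$ and integrate back up. This buys three things cleanly and without any normalization: (i) the $C^1$ matching at $t_2\pm\delta$ is automatic since the ramp is continuous with the correct boundary values; (ii) the exact (not merely approximate) agreement with $\sigma_2$ on $[t_2+\delta,t_3]$ follows from the symmetry of the ramp, via $\int_{t_2-\delta}^{t_2+\delta}v_\delta=\delta(u_1+u_2)$; and (iii) the nonvanishing derivative is isolated as the one genuinely geometric point, namely that non-collinearity keeps $0$ off the segment $[u_1,u_2]$ in which all velocities lie --- precisely the fact that is hidden inside the paper's normalization. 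It is worth noting that if you carry out your construction in the paper's normalized coordinates, the ramp integrates to the parabolic join $t+ic(t^2+\delta^2)/(2\delta)$ on $[-\delta,\delta]$, i.e.\ the two constructions coincide; your derivation, however, is arguably more robust (it would generalize verbatim to curves in $\mathbb{R}^n$) and makes each required property of $\sigma_\delta$ traceable to a specific feature of the velocity field. The only implicit point, harmless under the lemma's ``all sufficiently small $\delta$'' quantifier, is that $\delta$ must also satisfy $\delta<\min(t_2-t_1,\,t_3-t_2)$ so that the transition window fits inside $[t_1,t_3]$.
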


\begin{proof}
By  linear motions, we may assume $t_1=-1, t_2=0, t_3=1, z_1=-1+ic, z_2=0, z_3=1+ic$ 
and $\sigma_1(t)=t-ict, \sigma_2(t)=t+ict.$ 
We can smooth on the interval $[-1,1],$ by taking the even function $\sigma_\delta,$ which for $0\le t\le1$ is defined by 
$$
	\sigma_\delta(t) = t+ic\left(t^2+\delta(2-\delta)\right)/2.
$$
Since
$$
	|\sigma_\delta(t)-\sigma_j(t)| \le c\left(\delta^2+\delta(2-\delta)\right)/2 +c\delta,
$$
we indeed have  $|\sigma_\delta(t)-\sigma_j(t)|<\epsilon,$ for all sufficiently small $\delta.$
\end{proof}

A different proof of the lemma can be given by constructing a circle $C$ tangent to the segments $[z_2,z_1,]$ and $[z_2,z_3],$ whose center lies on the bisector of the acute angle formed by  these segments.   Denote by $w_1$ and $w_2$ the points of tangency and replace the two segments $[w_1,z_2]$ and $[z_2,w_2]$ by the smaller of the two arcs of $C\setminus \{w_1,w_2\}.$ We form a curve without corners: the concatenation of the segment $[z_1,w_1],$ the circular arc from $w_1$ to $w_2$ and the segment $[w_1,z_3].$ With an appropriate parametrisation of the arc, this curve is analytic. By choosing the center close to the vertex $z_2,$ we can make this curve as close to the original polygonal curve as we wish.


\smallskip
\noindent
{\bf Step 1: A sequence with prescribed cluster set}

Let $K$ be a  continuum in $\overline\C.$    Of course, it is easy to construct a sequence in $\C$ with $K$ as cluster set, but we wish this sequence to have special properties. 
We begin with the following well-known fact.

\begin{lemma}\label{metric}
Let $K$ be a connected metric space. Let $\delta>0$ and $p,q\in K.$ 
Then, there exists $n\in\N$ and $\{p_1,p_2,\dots,p_n\in K\},$ with $p_1=p, \, p_n=q$ and \mbox{dist}$(p_j,p_{j+1})<\delta, \, j=1,\ldots,n-1.$ 
\end{lemma}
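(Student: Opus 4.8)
The plan is to use the standard ``$\delta$-chain'' argument combined with the connectedness of $K$. Call a finite sequence $p_1,\dots,p_n\in K$ with $\mathrm{dist}(p_j,p_{j+1})<\delta$ for all $j$ a $\delta$-chain, and say that a point $x\in K$ is $\delta$-reachable from $p$ if there is a $\delta$-chain with $p_1=p$ and $p_n=x$. Let $A$ denote the set of all points of $K$ that are $\delta$-reachable from $p$. The one-term chain $p_1=p$ shows that $p\in A$, so $A$ is nonempty. The goal is to prove $A=K$, for then $q\in A$ is precisely the assertion of the lemma.

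First I would show that $A$ is open. Given $x\in A$, fix a $\delta$-chain $p=p_1,\dots,p_n=x$. For any $y$ in the open ball $B(x,\delta)$, appending $y$ produces the $\delta$-chain $p_1,\dots,p_n,y$, whence $y\in A$; thus $B(x,\delta)\subseteq A$.

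Next I would establish that the complement $K\setminus A$ is open by the same mechanism, run in the reverse direction. If $x\notin A$, then $B(x,\delta)\cap A=\emptyset$: were some $y\in B(x,\delta)$ in $A$, a $\delta$-chain from $p$ to $y$ could be prolonged by $x$ (since $\mathrm{dist}(y,x)<\delta$), placing $x$ in $A$, a contradiction. Hence $B(x,\delta)\subseteq K\setminus A$, so $A$ is also closed.

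Finally, $A$ is a nonempty subset of $K$ that is simultaneously open and closed, so connectedness of $K$ forces $A=K$; in particular $q\in A$, which furnishes the desired $\delta$-chain from $p$ to $q$. There is no genuine obstacle here: the whole content is the triangle-inequality observation that a $\delta$-chain may be extended across any ball of radius $\delta$, an observation that yields both the openness of $A$ and the openness of its complement. The only point meriting a moment's care is running the extension argument symmetrically, so that the clopen dichotomy is complete.
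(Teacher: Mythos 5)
Your proof is correct: the set $A$ of points $\delta$-reachable from $p$ is nonempty, open (a chain extends across any ball of radius $\delta$), and closed (by the symmetric extension argument, using that $\mathrm{dist}$ is symmetric), so connectedness of $K$ gives $A=K$ and in particular $q\in A$. The paper itself offers no proof, stating the lemma only as a ``well-known fact,'' and your clopen $\delta$-chain argument is precisely the standard proof of it, so there is nothing to compare beyond noting that you have supplied what the paper omitted.
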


Suppose first that $K$ is a nondegenerate continuum in $\overline\C.$
Let $z_j\in K\cap\C, j=1,2,\ldots,$ be a dense sequence of distinct points in $K.$ By Lemma \ref{metric}, and by induction, there is an increasing sequence $n(j)\in \N$ and a sequence $p_n\in K,$ such that 
$p_{n(j)}=z_j$ and 
$$
	|p_n-p_{n+1}|<\frac{1}{j}, \quad  \mbox{for} \quad  \mbox  n(j)\le n<n(j+1). 
$$
By  inserting nearby points (possibly not in $K,$) we may assume that no three consecutive points are colinear. Moreover, by occasionaly inserting at most two nearby points, we may assume that that there are  subsequences   $p_{n(k)}$ and $p_{n(\ell)},$  both of which approach every point of $K,$ such that every segment $[p_{n(k)},p_{n(k)+1}]$ is horizontal (with $[p_{n(k)}$ as left end point  and every segment  $[p_{n(\ell)},p_{n(\ell)+1}]$ is vertical (with  $p_{n(\ell)}$ as lower point). 

Now, suppose $K$ is a degenerate continnum in $\overline\C$ (that is, a point). We repeat the above procedure, where now we begin with an arbitrary sequence $z_j$ of distinct points in $\C,$ which converges to $K.$ 

We recapitulate this construction in the following lemma.  

\begin{lemma}\label{sequence}
For any two continua $K^-$ and $K^+$ of the Riemann sphere, there exists a double  sequence $\{p_n, n\in \Z\}$ in $\C,$ such that the cluster set of the sequence $p_0,p_{-1},\ldots,$ is precisely $K^-$ and the the cluster set of the sequence $p_0,p_1,\cdots,$ is precisely $K^+.$ 
No three consecutive points are colinear. There are subsequences $p_{n(i)}$ and $p_{n(j)}$ of  $p_0,p_{-1},\ldots$ both of which approach every point of $K^-$ such that every segment $[p_{n(i)},p_{n(i)+1}]$ is horizontal and every segment  $[p_{n(j)},p_{n(j)+1}]$ is vertical. Similarly, There are subsequences $p_{n(k)}$ and $p_{n(\ell)}$ of  $p_0,p_1,\ldots$ both of which approach every point of $K^+$ such that every segment $[p_{n(k)},p_{n(k)+1}]$ is horizontal (with $p_{n(k)}$ as left end point) and every segment  $[p_{n(\ell)},p_{n(\ell)+1}]$ is vertical (with  $p_{n(\ell)}$ as lower point). 
\end{lemma}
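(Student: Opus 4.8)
The plan is to prove Lemma~\ref{sequence} by assembling a bi-infinite sequence out of two one-sided constructions glued at a common base point $p_0$. Since the two cluster sets $K^-$ and $K^+$ are arbitrary continua in $\overline\C$, I would first fix any convenient finite point $p_0\in\C$ and then handle the ``positive'' side (producing $p_1,p_2,\ldots$ with cluster set $K^+$) and the ``negative'' side (producing $p_{-1},p_{-2},\ldots$ with cluster set $K^-$) entirely independently, reindexing the negative side so that $n$ runs over all of $\Z$. The substantive content is therefore the one-sided claim already carried out in the text preceding the lemma statement, so the proof is essentially a matter of recapitulating that construction and verifying that each asserted property survives the finitely many point-insertions.

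The core one-sided construction proceeds as follows. Given a nondegenerate continuum $K$, choose a countable dense sequence of distinct points $z_j\in K\cap\C$. Using Lemma~\ref{metric} with $\delta=1/j$, I interpolate between consecutive anchor points $z_j$ and $z_{j+1}$ by a finite chain of points of $K$ whose consecutive distances are less than $1/j$; concatenating these chains over all $j$ yields a sequence $p_n$ with $p_{n(j)}=z_j$ and $|p_n-p_{n+1}|<1/j$ on the block $n(j)\le n<n(j+1)$. Because the step sizes tend to $0$ and every $z_j$ (hence a dense subset of $K$) is hit, the cluster set of $(p_n)$ is exactly $\overline{\{z_j\}}=K$. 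For a degenerate $K=\{w\}$ I simply start from any sequence $z_j\to w$ of distinct finite points and run the identical interpolation, giving cluster set $\{w\}$.

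Three decorations must then be grafted on without disturbing the cluster set. First, to ensure no three consecutive points are colinear, whenever a colinearity occurs I insert one nearby off-line point; choosing its displacement smaller than the current step bound keeps all distances controlled. Second, to produce horizontal and vertical segments hitting every point of $K$, I select two subsequences of anchor indices (the $z_j$ are dense, so two disjoint dense sub-selections exist) and near each selected anchor insert at most two auxiliary points so that the outgoing segment is exactly horizontal (respectively vertical), with the prescribed point serving as the left (respectively lower) endpoint; again the inserted points are taken within distance $1/j$ of the anchor so they accumulate on the same dense set. Since only finitely many points are inserted in each block and all insertions are within the shrinking tolerance $1/j$, none of these modifications enlarges or shrinks the cluster set.

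The step I expect to require the most care is the simultaneous bookkeeping of the insertions: one must check that forcing colinearity-avoidance, horizontality, and verticality can all be imposed at once without conflict, and that reindexing after insertion still leaves \emph{two} subsequences of each orientation that approach \emph{every} point of $K$. The key observation defusing this is that the anchors $\{z_j\}$ are dense in $K$ and may be partitioned into several disjoint dense subsequences, so distinct structural demands can be assigned to disjoint families of anchors and handled locally; because every insertion is confined to a ball of radius $1/j$ about an anchor, the orientation-marked subsequences converge to exactly the same closure $K$, and the overall cluster-set computation is unaffected. Carrying out the two-sided gluing at $p_0$ and relabelling then yields the bi-infinite sequence $\{p_n:n\in\Z\}$ with all the stated properties.
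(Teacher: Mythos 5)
Your proposal is correct and follows essentially the same route as the paper: a dense (or convergent, in the degenerate case) anchor sequence, interpolation via Lemma~\ref{metric} with tolerance $1/j$, insertion of finitely many nearby points per block to break colinearity and to create horizontal and vertical segments with the stated endpoint conventions, and gluing the two one-sided sequences at $p_0$. Your extra bookkeeping (partitioning the anchors into disjoint dense subfamilies to assign the different structural demands) is a welcome precision that the paper leaves implicit.
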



\smallskip
 \noindent
{\bf Step 2:\\A polygonal curve with prescribed initial and terminal cluster sets}
  
Suppose $\eta_n$ is a linear mappig of $[n,n+1]$ onto the segment $[p_n,p_{n+1}]$
We define a curve $\eta:(-\infty,+\infty)\rightarrow\C$ by setting $\eta=\sum_{n=-\infty}^\infty\eta_n,$ where the sum represents concatination. Such a curve is said to be a  polygonal curve  with nodes $p_n,$

\begin{lemma}\label{polygonal}
For any two continua $K^-$ and $K^+$ of the Riemann sphere, there exists a polygonal curve $\eta:(-\infty,+\infty)\rightarrow\C,$  for which  
$$
	C(\eta,-\infty)= K^- \quad  \mbox{and} \quad C(\eta,+\infty) =K^+.
$$
No three consecutive nodes are colinear.  There are sequences $s_{n(i)}$ and $s_{n(j)}$ of real numbers tending to $-\infty,$ such that at these values  $\eta$ has a non-vanishing derivative, with  $\arg(\eta^\prime(s_{n(i}))=0$ and $\arg(\eta^\prime(s_{n(j}))=\pi/2.$ Moreover, the sequences $\eta(s_{n(i)})$ and $\eta(s_{n(j)})$ have $K^-$ as set of limits. There are analogous sequences  $s_{n(k)}$ and $s_{n(\ell)}$  with respect to $K^+.$
\end{lemma}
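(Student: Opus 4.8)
The plan is to feed the double sequence $\{p_n, n\in\Z\}$ produced by Lemma \ref{sequence} directly into the concatenation construction described in Step 2. Concretely, I would set $\eta_n(t)=p_n+(t-n)(p_{n+1}-p_n)$ for $t\in[n,n+1]$, so that $\eta_n$ is the linear map of $[n,n+1]$ onto $[p_n,p_{n+1}]$ with $\eta_n(n)=p_n$ and $\eta_n(n+1)=p_{n+1}$, and then define $\eta\colon(-\infty,+\infty)\to\C$ by $\eta(t)=\eta_n(t)$ for $t\in[n,n+1]$. Since consecutive pieces agree at the integers, $\eta$ is a well-defined continuous, piecewise-linear curve, and its nodes are exactly the $p_n$. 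The ``no three consecutive nodes are colinear'' clause is then inherited verbatim from Lemma \ref{sequence}.

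Next I would identify the cluster sets. I claim $C(\eta,+\infty)=K^+$ and $C(\eta,-\infty)=K^-$, and these follow because the interpolating segments shrink. Indeed, for $n(j)\le n<n(j+1)$ every point of the segment $[p_n,p_{n+1}]$ lies within $|p_{n+1}-p_n|<1/j$ of $p_n$, so $\mathrm{dist}(\eta(t),\{p_m\})\to 0$ as $t\to+\infty$. Hence if $\eta(t_m)\to w$ with $t_m\to+\infty$, choosing $p_{k_m}$ a nearest node to $\eta(t_m)$ gives $p_{k_m}\to w$ with $k_m\to+\infty$, so $w$ lies in the cluster set of $p_0,p_1,\dots$, namely $K^+$; conversely every cluster value of the nodes is clearly a cluster value of $\eta$. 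Thus $C(\eta,+\infty)$ equals the cluster set of the forward node sequence, which is $K^+$, and symmetrically $C(\eta,-\infty)=K^-$.

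Finally I would extract the prescribed tangent directions. On each open piece $(n,n+1)$ the curve is affine with $\eta'(t)=p_{n+1}-p_n\neq 0$. Taking $s_{n(i)}=n(i)+\tfrac12$ for the horizontal subsequence of Lemma \ref{sequence}, the derivative $\eta'(s_{n(i)})=p_{n(i)+1}-p_{n(i)}$ is a positive real number (horizontal, with $p_{n(i)}$ as left endpoint), so $\arg(\eta'(s_{n(i)}))=0$; taking $s_{n(j)}=n(j)+\tfrac12$ for the vertical subsequence gives a positive imaginary derivative, so $\arg(\eta'(s_{n(j)}))=\pi/2$. Because the indices $n(i),n(j)$ of the subsequences of $p_0,p_{-1},\dots$ tend to $-\infty$, so do the $s_{n(i)},s_{n(j)}$. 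Moreover $\eta(s_{n(i)})$ is the midpoint of a segment of length $<1/j$, so it has the same set of limit points as $p_{n(i)}$, namely all of $K^-$; the same holds for $\eta(s_{n(j)})$. The analogous choices at the two horizontal/vertical subsequences of $p_0,p_1,\dots$ furnish $s_{n(k)},s_{n(\ell)}$ for $K^+$. The only genuinely delicate step is the cluster-set identification of the second paragraph, where one must confirm that the interpolating segments introduce no spurious cluster values; everything else is a direct bookkeeping of the properties already guaranteed by Lemma \ref{sequence}.
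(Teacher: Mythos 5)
Your proof is correct and takes essentially the same route as the paper's: the same concatenation of linear maps $\eta_n:[n,n+1]\rightarrow[p_n,p_{n+1}]$ applied to the sequence of Lemma \ref{sequence}, the same shrinking-segment argument identifying $C(\eta,\pm\infty)$ with the cluster sets of the node sequences, and the same choice of midpoints $n(i)+\tfrac12$ of the parameter intervals for the four tangent-direction sequences. The only difference is one of detail: you spell out the two-sided cluster-set identification via nearest nodes, whereas the paper simply asserts it from the fact that the segment lengths tend to zero.
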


\begin{proof}
By Lemma \ref{sequence} there is a sequence $p_n; n=0,1,2,\ldots,$ associated to $K^+$ and a sequence $p_n; n=0,-1,-2,\ldots,$ associated to $K^-.$ Let $\eta_n$ be a linear mapping of the interval $[n,n+1]$ onto the segment  $[p_n,p_{n+1}]$ and put $\eta=\sum\eta_n.$ 
Since the lengths of the segments  $[p_n,p_{n+1}]$ tend to $0,$ as $n\rightarrow\infty,$ it follows that the cluster set of $\eta$ at $-\infty$ is $K^-$ and the cluster set of $\eta$ at $+\infty$ is $K^+.$ 

We construct the sequence $s_{n(i)}$ as follows. Let $\{n(i)\}$ be the sequence from Lemma \ref{sequence}. From the previous paragraph, $\eta_{n(i)}$ is a linear mapping of the interval $[n(i),n(i+1)]$ onto the horizontal segment  $[p_{n(i)},p_{n(i)+1}].$ As $s_{n(i)}$  we  choose the mid-point of the open interval $(n(i),n(i+1)).$ Clearly, the sequence  $s_{n(i)}$ has the required properties. The other three sequences  $s_{n(j)}, s_{n(k)}$ and $s_{n(\ell)}$  are constructed similarly. 
\end{proof}


\smallskip
 \noindent
{\bf Step 3: A smooth curve with prescribed initial and terminal cluster sets}

Now we shall smooth the polygonal curve $\eta.$ 

\begin{lemma}\label{smooth}
For any two continua $K^-$ and $K^+$ of the Riemann sphere, there exists a smooth curve $\sigma:(-\infty,+\infty)\rightarrow\C,$  for which  
$$
	C(\sigma,-\infty)= K^- \quad  \mbox{and} \quad C(\sigma,+\infty) =K^+.
$$
The curve $\gamma$ has the same values as the polygonal curve $\eta$ in a neighborhood of the values  $s_{n(i)}, s_{n(j)}, s_{n(k)}$ and $s_{n(\ell)}.$
\end{lemma}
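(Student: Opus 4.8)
The plan is to smooth the polygonal curve $\eta$ from Lemma \ref{polygonal} one corner at a time, using the local smoothing construction of Lemma \ref{corner}, while being careful that the smoothing performed near each node $p_n$ does not disturb the curve on the special parameter values $s_{n(i)}, s_{n(j)}, s_{n(k)}, s_{n(\ell)}$ (which lie at the midpoints of the edge-intervals, away from the integer nodes). First I would fix, for each integer $n$, a smoothing radius $\delta_n>0$ small enough that the intervals $(n-\delta_n, n+\delta_n)$ are pairwise disjoint and, crucially, disjoint from all the chosen midpoints $s_{n(i)},\dots$; this is possible since the midpoints sit strictly inside the open unit subintervals. On each such interval $[n-\delta_n, n+\delta_n]$ the curve $\eta$ consists of the tail of the segment $[p_{n-1},p_n]$ and the head of the segment $[p_n,p_{n+1}]$ meeting at the corner $\eta(n)=p_n$; since no three consecutive nodes are colinear (Lemma \ref{sequence}/\ref{polygonal}), these two segments are genuinely non-colinear, so Lemma \ref{corner} applies with $t_2=n$ and produces a $C^1$ replacement $\sigma_{\delta_n}$ with nonvanishing derivative agreeing with $\eta$ outside $(n-\delta_n,n+\delta_n)$.

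Next I would define $\sigma:(-\infty,+\infty)\to\C$ by patching: set $\sigma=\eta$ off the union of the smoothing intervals, and $\sigma=\sigma_{\delta_n}$ on each $[n-\delta_n,n+\delta_n]$. The agreement of $\sigma_{\delta_n}$ with the appropriate linear pieces at the endpoints of its interval guarantees the patches match to first order, so $\sigma$ is globally $C^1$ with $\sigma'\neq 0$, i.e.\ a smooth (regular) curve. By construction, on a neighborhood of each $s_{n(i)}, s_{n(j)}, s_{n(k)}, s_{n(\ell)}$ we have $\sigma=\eta$, which is exactly the last assertion of the lemma (and which preserves the horizontal/vertical tangent directions recorded in Lemma \ref{polygonal} for the eventual argument that the cluster sets are attained with prescribed arguments).

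It remains to check the cluster sets. Here I would choose the radii $\delta_n$ to shrink as $|n|\to\infty$, say $\delta_n\to 0$, and invoke the estimate $|\sigma_{\delta_n}(t)-\eta(t)|<\epsilon_n$ from Lemma \ref{corner} with $\epsilon_n\to 0$. Since $\sigma$ differs from $\eta$ only inside the smoothing intervals and there by at most $\epsilon_n$, and since the diameters of the segments $[p_n,p_{n+1}]$ already tend to $0$ as $n\to\pm\infty$, the perturbation affects neither the initial nor the terminal cluster set. Hence $C(\sigma,-\infty)=C(\eta,-\infty)=K^-$ and $C(\sigma,+\infty)=C(\eta,+\infty)=K^+$.

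The main obstacle I expect is bookkeeping rather than depth: one must verify that the $\delta_n$ can be chosen simultaneously small enough to (a) keep the smoothing intervals mutually disjoint, (b) avoid every special midpoint $s_{n(\cdot)}$, and (c) satisfy the hypothesis ``all sufficiently small $\delta$'' required by Lemma \ref{corner} at each corner (the threshold depends on the angle at $p_n$ and on the chosen $\epsilon_n$). Since each corner is treated independently and the three requirements are open conditions on $\delta_n$, a single admissible choice exists for every $n$, and the construction goes through. The only subtlety worth stating explicitly is that the non-colinearity of consecutive nodes is exactly what makes Lemma \ref{corner} applicable at every corner, so no corner is pathological.
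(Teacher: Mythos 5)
Your proposal is correct and follows essentially the same route as the paper: the paper likewise smooths the polygonal curve corner by corner via Lemma \ref{corner}, taking the error on the $n$-th edge to be less than $|n+1|^{-1}$ so that $|\sigma(t)-\eta(t)|\rightarrow 0$ preserves both cluster sets, and choosing each $\delta$ small enough that $\sigma=\eta$ on an interval about the midpoints $s_{n(i)}, s_{n(j)}, s_{n(k)}, s_{n(\ell)}$. Your extra bookkeeping (disjointness of the smoothing intervals and $C^1$ matching at their endpoints) is left implicit in the paper but is precisely what makes the concatenation legitimate.
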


\begin{proof}
We begin with the polygonal curve $\eta=\sum\eta_n$ from Lemma \ref{polygonal}. We replace each $\eta_n$ by a smoothing $\sigma_n$ of $\eta_n$ obtained by Lemma \ref{corner}, such that
$$ 
	|\sigma_n(t)-\eta_n(t)|<|n+1|^{-1}, \quad \mbox{for} \quad t\in[n,n+1],  \quad  n\in\Z.
$$
The concatination $\sigma=\sum\sigma_n$ has the required properties. Indeed, it
 has the required initial and terminal cluster sets, because $|\sigma(t)-\eta(t)|\rightarrow 0,$ as $t\rightarrow\infty.$  Each time we invoke Lemma \ref{corner}, we may choose $\delta$ so small that $\sigma(t)=\eta(t)$ in an interval about the mid-point of the parameter interval $(n,n+1).$ The four sequences consist of such mid-points.  
\end{proof}


\section{Proof of Theorem \ref{entire}}

By a theorem of Hoischen \cite{H} (see also \cite[Cor. 1.4]{GK} for an elementary proof), for each $C^1$-smooth function $\sigma:(-\infty,+\infty)\rightarrow\C$ and each continuous function $\epsilon:(-\infty,+\infty)\rightarrow (0,+\infty),$ there is an entire function $f$ such that
$$
	|f^{(j)}(t)-\sigma^{(j)}(t)| < \epsilon(t), \quad \mbox{for all} \quad t\in(-\infty,+\infty), \quad j=1,2. 
$$
If $\sigma$ is the curve from Lemma \ref{smooth}  and the function $\epsilon$  tends to zero, as $t\rightarrow \infty,$ then
the curve $\gamma(t)=f(t),$ for $t\in(-\infty,+\infty),$ has the same initial and terminal cluster sets as
the curve $\sigma.$ That is, $\gamma$ has $K^-$ and $K^+$ as initial and terminal cluster sets. 

Moreover,  we claim that the curve $\gamma$ cannot be extended to $-\infty$ or $+\infty$ analytically  for any reparametrization of the increasing parameter, provided we choose $\epsilon$ to decrease sufficiently rapidly. In fact, this is obvious in the case that the corresponding cluster set is non-degenerate. The following proof is thus only of interest if one or both of the initial and terminal cluster sets are degenerate continua (singletons).

Since $\sigma^\prime(t)\not=0,$ it follows that $\arg\sigma^\prime$ is uniformly continuous on compact subsets of $(-\infty,+\infty).$ Hence we may choose $\epsilon$ to decrease so rapidly that 
$\arg\gamma^\prime(t)$ is close to zero for $t=s_{n(i)}$ and $t= s_{n(k)}$ and is close to $\pi/2$ for $t=s_{n(j)}$ and $t=s_{n(\ell)}.$ 
It follows that $\arg\gamma^\prime(t)$ diverges as $t\rightarrow-\infty$ and as $t\rightarrow+\infty.$ Consequently, $\gamma$ cannot be extended analytically to any larger Riemann surface by any analytic reparametrization, for such an extension would have to be conformal and (by definition) preserve angles. In particular, if $K^-$ or $K^+$ is a point $P$ on the Riemann sphere, then $\gamma$ cannot be extended analytically through $P$ by any analytic reparametrisation. 

This concludes the proof of Theorem \ref{entire}.


\section{Proof of the corollary}

Let $f$ and $\gamma$ be the entire function and analytic curve obtained from Theorem \ref{entire}, where $\gamma$ is the restriction of $f$ to the real line. Let $\Omega$ be a neighborhood of the real line, in which $f^\prime$ is zero-free. We may assume that $\Omega$ has the form of a ``strip" $w=u+iv: |v|<\varphi(u).$  We may assume that $\varphi$ decreases to zero so rapidly that $|f(u+iv)-\gamma(u)|<1/(1+u)$ for $|v|<\varphi(u).$ This assures us that $f$ has the same initial and terminal cluster sets in the strip $\Omega$ as $\gamma$ has on the real line.

Let $h$ be the conformal mapping of $\C\setminus \big([+i,+i\infty)\cup[-i,-i\infty)\big)$ onto the strip $\Omega,$ which sends $-\infty$ to $-\infty,$ $+\infty$ to $+\infty,$ $0$ to $0$ and the real line to itself. The locally conformal function $G=f\circ h$  and its restriction $g$ to the real line, have the required properties. Indeed, since  $h$ is an order preserving homeomorphism of the real line, $g=\gamma\circ h$ has the same initial and terminal cluster sets as $\gamma.$  Similarly, $G=f\circ h$ has the same initial and terminal cluster sets as $f$ in $\Omega,$ which are the same initial and terminal cluster sets as those of $\gamma.$ 

There remains to check that $g$ cannot be analytically extended. We note that $g^\prime(u)=\gamma^\prime(h(u))h^\prime(u),$ and $h^\prime(u)$ is real and positive, so $\arg h^\prime(u)=0.$ Thus, 
$\arg g^\prime(u)=\arg \gamma^\prime(h(u))+\arg h^\prime(u)=\arg \gamma^\prime(h(u)).$ Since $\arg\gamma^\prime(t)$ diverges as $t\rightarrow-\infty$ and as $t\rightarrow+\infty$
the same holds for $\arg g^\prime(u),$ as  $u\rightarrow-\infty$ and as $u\rightarrow+\infty.$  
Thus, $g$ cannot be extended analytically and this concludes the proof of the corollary.


\section{Examples of maximal analytic curves}

If the initial cluster set of a curve is a singleton $\{P\}$, we call $P$ the initial end of the curve. Similarly, if the terminal cluster set of a curve is a singleton, we call it the terminal end of the curve. A particular case of Theorem \ref{analytic} is that, for any two points (not necessarily distinct) $k^-$ and $k^+$ of the Riemann sphere, there is a maximal analytic curve, having  $k^-$ and $k^+$ as initial and terminal  ends respectively. We now give a few explicit examples of maximal analytic curves having both initial and terminal ends. As in the general case, proved above, the reason that these curves are maximal is that the argument of the  tangent $\gamma^\prime(t)$ diverges as $t\rightarrow\pm\infty.$

Example 1. Both ends are finite and equal.

$$
	\gamma(t) = e^{-t^2+it}, \quad  -\infty<t<+\infty. 
$$

Example 2. Both ends are finite and distinct.  Consider the function 
$$
	\psi(s) = s\exp\left(\frac{1}{1-s^2}\right), \quad  -1<s<+1. 
$$
The function $\psi$ is analytic with positive derivative and hence has an analytic inverse. $\eta:(-\infty,+\infty)\rightarrow (-1,+1).$ The analytic curve 
$$
	\gamma(t)=\eta(t)+i(\eta^2(t)-1)\sin\left(\exp\left(-(\eta^2(t)-1)^{-1}\right)\right) 
$$ 
has $\pm 1$ as ends.  As $x\searrow 0,$ $\exp(-x^{-1})$ approaches $0$ much faster and so the argument of $\gamma^\prime(t),$ does not have a limit, as $t\rightarrow\pm 1.$

Example 3. One end is finite and one is infinite.
$$
	\gamma(t) = e^{t+it}, \quad  -\infty<t<+\infty.
$$

Example 4. Both ends are infinite. 
$$
	\gamma(t) = e^{t^2+it}, \quad  -\infty<t<+\infty.
$$



\end{document}